\documentclass[12pt]{amsart}
\usepackage{amssymb,amsmath,url}

\theoremstyle{plain}
\newtheorem{theorem}{Theorem}
\newtheorem{lemma}[theorem]{Lemma}
\newtheorem{proposition}[theorem]{Proposition}

\newtheorem{fact}[theorem]{Fact}
\newtheorem{problem}{Problem}

\theoremstyle{definition}

\DeclareMathOperator{\im}{{\sf im}}

\DeclareMathOperator{\End}{{\sf End}}
\DeclareMathOperator{\LEnd}{{\sf End}^\ast}
\DeclareMathOperator{\id}{{\sf id}}

\newcommand{\sk}[2]{\langle{#1}\mid{#2}\rangle}

\newcommand{\lab}[1]{\label{#1}}

\begin{document}

\title[Direct finiteness]{Direct finiteness
of representable regular $*$-rings}

\author[C. Herrmann]{Christian Herrmann}
\address{Technische Universit\"{a}t Darmstadt FB4\\ Schlo{\ss}gartenstr. 7\\ 64289 Darmstadt\\ Germany}
\email{herrmann@mathematik.tu-darmstadt.de}

\dedicatory{Dedicated to the memory of Susan M. Roddy}

\subjclass{16E50, 16W10}
\keywords{Regular ring with involution, representation, direct finiteness}

\maketitle

\begin{abstract}
For $*$-regular rings $R$  of endomorphisms of inner product spaces
we provide a condition on $r$ sufficient for $r$ being a unit.
In contrast to the claim made in earlier
versions of this note it remains open whether this condition can be used
to establish direct finiteness of $R$. 
\end{abstract}

\setcounter{section}{-1}
\section{Erratum}
A long standing problem of Handelman's  (cf. Problem 48 in \cite{good}) asks whether 
all $*$-regular rings are directly finite.
In \cite[Thm.3.1]{arch}  and, in more general context, in \cite[Thm.4]{arch2} direct finiteness
 has been claimed for representable $*$-regular
rings, i.e. for regular subrings $R$
of endomorphisms   of inner product spaces $(V_F,\Phi)$ where  a proper involution
on $R$ is given by 
$r \mapsto r^*$, the adjoint of $r$.

Though, what has been shown is only the following:
Let 
 $\mathcal{F}$ denote  the set of  finite dimensional 
subspaces $U$ of $V_F$.
 Then $r$ is a unit of $R$ 
provided that $(*)$: For each $U\in \mathcal{F}$
 there are $W\supseteq U$ in $\mathcal{F}$ 
and  a unit $\alpha $ of ${\sf End}(W_F)$ such that
$\alpha|U=r|U$ and $\alpha^*|U=r^*|U$.

In \cite{arch} it has been tacitly assumed that 
$(*)$ holds if $sr=1$ for some $s\in R$. No proof has been given.
 Thus, the following claims remain without proof: 
Thm. 3.1 in \cite{arch}, Thm. 4  and Cor. 6 in \cite{arch2},
Fact 5 and Thm. 7 in  \cite{he3}. Lemma 5 in \cite{arch2}
is wrong:  $(*)$ fails  for the   shift matrix $r$ 
in the $*$-ring  of row and column finite matrices over $\mathbb{R}$ 
(which  is  not regular, 
as this author learned from
Fred Wehrung \cite[Ex.8.19]{fred}, cf. \cite{count}) 
embedded (as ring) into ${\sf End}(V_\mathbb{R})$,
$\dim V_\mathbb{R} =\aleph_0$.

To the best of this authors knowledge, the problem
of direct finiteness  remains open
for the class of  $*$-regular rings as well as for
its subclass of representables and, more specially,
  of semi-artinians.

\section{Introduction}
A $*$-ring, that is ring with involution,
is called \emph{finite} if  $rr^*=1$  implies $r^*r=1$. This is a basic notion
in the classification of von Neumann algebras; in particular,
as shown by Murray and von Neumann,
a finite von Neumann algebra admits a finite
 $*$-ring of
quotients.  This ring is also $*$-regular
and \emph{directly finite}, that is $rs=1$ implies $sr=1$.
As Ara and Menal \cite{ara} have shown,
any  $*$-regular ring is at least finite, while
direct finiteness remains an open question,
as stated by 
 Handelman \cite[Problem 48]{good}.
The present note gives for $*$-regular rings $R$ of endomorphisms 
of inner product spaces a condition in terms of the  action of $r$ on finite
dimensional subspaces 
sufficient  for $r$ being a  unit.

For $\ast$-rings, there is a natural and well established
concept of [faithful] representation in a vector space $V_F$
endowed with a non-degenerate orthosymmetric sesquilinear form:
an embedding  into the $*$-ring $\LEnd(V_F)$
of those endomorphisms of $V_F$ which admit an adjoint.
Famous examples 
are due to Gel'fand-Naimark-Segal ($C^\ast$-algebras in Hilbert space) and
Kaplansky (primitive $\ast$-rings with a minimal right ideal).
For $\ast$-regular rings of classical quotients
of finite Rickart $C^\ast$-algebras
existence of  representations has been established in \cite{PartI},
jointly with M.~Semenova.
N.~Niemann \cite{Nik,nik2}
has shown that a subdirectly irreducible  $*$-regular ring 
is representable
if and only  its ortholattice lattice 
of principal right ideals is representable within the ortholattice
of closed subspaces of some $V_F$.

According to joint work with Susan M. Roddy \cite{hr2},
representability of modular ortholattices 
 is equivalent to 
membership in a variety 
generated by finite height members.
Using ideas from Tyukavkin \cite{tyu},
the analogue  for  $*$-regular rings
was obtained by F.~Micol \cite{Flo}. Here, we rely
on the presentation given in \cite{act}:
A regular $*$-ring can be represented within $V_F$, respectively
some ultrapower thereof,
if and only if it can be obtained
 via formation of ultraproducts,
regular $*$-subrings, and homomorphic images from 
the class of the $\LEnd(U_F)$, $U_F$
ranging over finite dimensional
 non-degenerate subspaces of $V_F$.
In the setting of  \cite{act}
we derive a condition on $r$ sufficient for  $r$ 
being a unit in $R$. It remains open whether this can be used
to establish direct finiteness.

Thanks are due to Ken Goodearl for the hint to
reference \cite{FU}.

\section{Preliminaries}
When mentioning rings, we always mean associative rings $R$ with  unit $1_R$,
considered as a constant.
In any ring, if $a$ has a \emph{left inverse} $x$, that is $xa=1$,
then $a$ is \emph{left cancellable}, that is $ay=az$ implies $y=z$.
$R$ is \emph{directly finite} if, for all $r,s \in R$, $sr=1$ implies $rs=1$.
In such ring, $a$ has a left inverse  if and only if $a$ is a unit (and $x=a^{-1}$).
The endomorphism ring $\End(V_F)$ of a vector space is
directly finite if and only if $\dim V_F<\omega$.
A $\ast$-\emph{ring} is a ring endowed with an involution $r \mapsto r^*$; 
an element $e$ of such a ring  is a \emph{projection}, if $e=e^2=e^\ast$.

 A ring $R$ is [\emph{von Neumann}] \emph{regular} if for any $a\in R$,
there is an element $x\in R$ such that $axa=a$; such an element is called a \emph{quasi-inverse} of $a$. 
If, for all $a$,
 $x$ can be chosen a unit, then $R$ is \emph{unit regular}.
 Examples of such are the $\End(V_F)$, $\dim V_F<\omega$.
A detailed discussion of direct finiteness in 
 regular rings is given in Goodearl \cite{good}.
A regular $*$-ring is $*$-\emph{regular} if $xx^*=0$ only for $x=0$.

In a regular ring, any left cancellable $a$
has a left  inverse; indeed $axa=a$ implies 
$xa=1$. 
If $xa=1$ and  $aua=a$ with a unit $u$ then $a=u^{-1}$ and $x=a^{-1}$.
 It follows
\begin{fact}\lab{un} In a directly finite regular ring
every left cancellable element is a unit --
similarly on the right. Every
unit regular ring is directly finite. 
\end{fact}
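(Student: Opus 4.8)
The plan is to read off both assertions directly from the two observations recorded immediately above, handling the two parts separately. No auxiliary construction seems necessary; the entire content is an assembly of facts already in hand.

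For the first assertion, I would start with a left cancellable element $a$ of a directly finite regular ring. Regularity supplies a quasi-inverse $x$ with $axa=a$; writing this as $a(xa)=a\cdot 1$ and cancelling the leading $a$ yields $xa=1$, so $a$ has a left inverse. Direct finiteness then promotes $xa=1$ to $ax=1$, whence $a$ is a unit with $a^{-1}=x$. (Equivalently, one may simply invoke the already-noted equivalence, in a directly finite ring, between possessing a left inverse and being a unit.) For the parenthetical \emph{similarly on the right}, I would observe that both hypotheses---regularity and direct finiteness---are preserved under passage to the opposite ring, so the identical argument with the two sides interchanged shows that every right cancellable element is likewise a unit.

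For the second assertion, suppose $R$ is unit regular and that $sr=1$; the goal is $rs=1$. Applying unit regularity to $r$ produces a unit $u$ with $rur=r$. These are exactly the hypotheses of the preceding observation under the substitution $x:=s$, $a:=r$, so its conclusion gives $r=u^{-1}$ and $s=r^{-1}$; in particular $r$ is invertible with inverse $s$, and hence $rs=1$. As $s,r$ were arbitrary, $R$ is directly finite.

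Since every step is a one-line deduction from material already established, I do not anticipate a genuine obstacle. The only points demanding care are bookkeeping ones: confirming that the left/right symmetry really does transfer both hypotheses when proving the right-handed half of the first assertion, and checking that the variables of the observation on $xa=1$ and $aua=a$ are matched to $s$ and $r$ in the correct order in the second.
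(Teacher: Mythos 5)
Your proposal is correct and follows essentially the same route as the paper: cancelling $a$ in $axa=a$ to obtain the left inverse $xa=1$, invoking direct finiteness to upgrade a left inverse to a unit (with the opposite-ring remark covering the right-hand case), and deducing direct finiteness of unit regular rings from the observation that $xa=1$ together with $aua=a$ for a unit $u$ forces $a=u^{-1}$ and $x=a^{-1}$. No gaps; the variable matching $x:=s$, $a:=r$ is handled exactly as the paper intends.
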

\begin{fact}\label{unit}
If $R$ is a  regular subring of the ring $R`$ and $r \in R$
a unit in $R'$ then $r$ is a unit in  $R$.
\end{fact}

We recall some basic concepts and facts from \cite{act}
(here, $\Lambda$
can be  taken the $\ast$-ring of integers).
In the sequel,  $F$ will be a division ring  endowed with an involution
and $V_F$ a [right] $F$-vector space 
of $\dim V_F>1$ endowed with a
non-degenerate sesquilinear form $\sk{.}{.}$
which is \emph{orthosymmetric}, that is  
$\sk{v}{w}=0$ iff $\sk{w}{v}=0$.
 Such space 
will be called \emph{pre-hermitean} and denoted by $V_F$, too.
Within such space, any endomorphism $\varphi$ has
at most one adjoint $\varphi^*$;
and these $\varphi$ form a subring of $\End(V_F)$
which is a $\ast$-ring $\LEnd(V_F)$ under the involution $\varphi\mapsto
 \varphi^*$. For $\dim V_F<\omega$, $\LEnd(V_F)$
contains all of $\End(V_F)$.
A [faithful] \emph{representation} of a $\ast$-ring $R$ is an
embedding of $R$ into some $\LEnd(V_F)$.

\medskip
Consider a  linear subspace $U$ of $V_F$, $1 <\dim U_F<\omega$. 
With 
 the induced sesquilinear form,
 $U_F$ is pre-hermitean  if and only if $V=U\oplus U^\perp$;
in particular, there is a projection $\pi_U \in \LEnd(V_F)$
such that $U=\im \pi_U$ and such that
the inclusion map 
$\varepsilon_U:U \to V$ is the adjoint of $\pi_U$
(here, considered as a map $V \to U$).
We write in this case $U\in\mathbb{O}(V_F)$ and say that
$U$ is a \emph{finite-dimensional orthogonal summand}.
A crucial fact is that $V_F$ is the directed union of the
$U_F$, $U\in \mathbb{O}(V_F)$.
Let $C(F)$ denote the center of $F$ and, for $U\in\mathbb{O}(V_F)$,
\[\begin{array}{lcl}
B_U&=&\{\varepsilon_U\varphi\pi_U+\lambda\id_V\mid\varphi\in\LEnd(U_F),\ \lambda\in 
C(F)\}\\
&=&\{\psi \in \End(V_F)\mid \psi(U) \subseteq U \;\&\;
\exists \lambda \in C(F)\,\psi|U^\perp =\lambda \id_{U^\perp}\}
\end{array}\]
($\varphi$ and $\psi$ are related via $\varphi(v)= \psi(v)+\lambda v$).
Thus,
$B_U$ is a  $\ast$-subring of
$\LEnd(V_F)$ and embeds into $\LEnd(W_F)$
for any $W\in \mathbb{O}(V_F)$, $U \subset W \neq U$.
In particular, $B_U$ is directly finite. Moreover, $B_U$ is unit regular; indeed,
$\chi\in B_U$ is a unit  quasi-inverse of $\psi\in B_U$ 
if $\chi|U$ is one of $\psi|U$ and $\chi|U^\perp= (\psi|U^\perp)^{-1}$
(considering these as endomorphisms of $U$ and $U^\perp$, respectively). 
We put 
\[\begin{array}{lcl}
J(V_F)&=&\{\varphi\in\LEnd(V_F)\mid\dim\im\varphi<\omega\}\\
\hat{J}(V_F)&=&\{\varphi+\lambda \id_V\mid \varphi\in J(V_F),\,\lambda \in C(F)
\}
\end{array}\]
According to  \cite[Proposition 4.4]{act}
$\hat{J}(V_F)$ is a $\ast$-subring
of $\LEnd(V_F)$ and
 $J(V_F)$ is  an ideal of $\LEnd(V_F)$ closed under the involution.
Also,
the following holds. 
\begin{itemize}
\item[($*$)]
For any finite  $\Phi\subseteq J(V_F)$ there is $U\in \mathbb{O}(V_F)$
such that $\varphi=\pi_U\varphi =\varphi \pi_U$ for all $\varphi\in \Phi$.
\end{itemize}
Thus,
$\hat{J}(V_F)$ is  the directed union of the $B_U,\,U\in \mathbb{O}(V_F)$,
whence  unit-regular. 

\begin{lemma}\lab{atex2}
Every regular $\ast$-subring $R$ of $\LEnd(V_F)$
extends to a  regular $\ast$-subring $\hat{R}$ of $\LEnd(V_F)$
containing $\hat{J}(V_F)$ and such that $J(V_F)$ is an ideal of $\hat{R}$.
\end{lemma}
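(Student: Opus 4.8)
The plan is to take the smallest subring that has any chance of working, namely
\[
\hat{R}=C(F)R+J(V_F)=\{\textstyle\sum_i\lambda_i r_i+\varphi\mid \lambda_i\in C(F),\ r_i\in R,\ \varphi\in J(V_F)\},
\]
the sum of the $C(F)$-span of $R$ with the ideal $J(V_F)$. This candidate is essentially forced: any $\ast$-subring containing both $R$ and $\hat{J}(V_F)$ must contain $R$, the central scalars $C(F)\id$, and $J(V_F)$; and computing a product $\lambda r$ of a central scalar with an element of $R$ shows that closure under multiplication already drags in all central multiples $\lambda r$, so that $R+\hat{J}(V_F)$ by itself fails to be closed and one is pushed to the full $C(F)$-span. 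With this candidate the assertions to be checked reduce to: $\hat{R}$ is a regular $\ast$-subring containing $\hat{J}(V_F)$ (the ideal property of $J(V_F)$ then being automatic, since $J(V_F)$ is already an ideal of $\LEnd(V_F)$).

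First I would clear the routine points. As $C(F)\id$ is central in $\LEnd(V_F)$ and $J(V_F)$ is an ideal there, the displayed set is closed under multiplication: products of two $C(F)R$-parts stay in $C(F)R$, while every cross term involving some $\varphi\in J(V_F)$ lands in $J(V_F)$. Since the involution preserves $C(F)$ and carries $R$ and $J(V_F)$ to themselves, $\hat{R}$ is a $\ast$-subring; it contains $\hat{J}(V_F)=J(V_F)+C(F)\id$ by construction, and $J(V_F)$ is an ideal of $\hat{R}$ because it is one in $\LEnd(V_F)$. For regularity I would invoke the standard extension principle (Goodearl \cite{good}): a ring is regular once some two-sided ideal $I$ and the quotient by $I$ are both regular. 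Here $J(V_F)$ is an ideal of the unit-regular ring $\hat{J}(V_F)$, and an ideal of a regular ring is itself regular, so $J(V_F)$ qualifies. (If $\dim V_F<\omega$ then $J(V_F)=\LEnd(V_F)=\hat{R}$ is already regular, so one may assume $\dim V_F$ infinite.)

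What remains, and what I expect to be the genuine obstacle, is the regularity of $\hat{R}/J(V_F)$. Writing $\overline{R}:=R/(R\cap J(V_F))$, which is regular as a quotient of the regular ring $R$, and passing into $Q:=\LEnd(V_F)/J(V_F)$, one identifies $\hat{R}/J(V_F)\cong C(F)R/(C(F)R\cap J(V_F))$ as the subring generated by $\overline{R}$ together with the central subfield $\overline{C(F)}$, the image of $C(F)\id$ (a field, since in infinite dimension a nonzero central scalar has infinite rank and so avoids $J(V_F)$). Thus everything comes down to showing that adjoining the central field $C(F)$ to the regular ring $\overline{R}$ preserves regularity. This is false for arbitrary central field extensions — a central element transcendental over a subfield would generate a polynomial ring — so the proof must exploit the ambient structure. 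I would reduce, by writing $\overline{C(F)}$ as the directed union of its finitely generated subfields, to adjoining finitely generated central field extensions, and then rule out the polynomial obstruction by showing that modulo $J(V_F)$ the pertinent central scalars are already algebraic over $\overline{R}$ (the essential center of $Q$ contributing nothing transcendental), with a separability check ensuring that the resulting finite central extensions of the regular ring $\overline{R}$ remain regular. Granting this, the extension principle yields regularity of $\hat{R}$ and completes the proof.
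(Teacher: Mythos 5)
Your reduction is sound up to the last step, but that last step is not a technical check you may grant yourself --- it is false, and with it your candidate ring. There is no reason for the central scalars to be algebraic over $\overline{R}$ modulo $J(V_F)$, and when they are not, $\hat{R}=C(F)R+J(V_F)$ is simply not regular. Concretely: let $F=\mathbb{Q}(t)$ with the identity involution, let $V=\mathbb{Q}(s,t)$ as a right $F$-vector space, and put $\sk{x}{y}=\ell(xy)$ for some nonzero $F$-linear functional $\ell\colon V\to F$; this form is symmetric (hence orthosymmetric) and non-degenerate, and every multiplication operator $\mu_a\colon x\mapsto ax$, $a\in\mathbb{Q}(s,t)$, is self-adjoint, so lies in $\LEnd(V_F)$. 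Take $R=\{\mu_a\mid a\in\mathbb{Q}(s)\}\cong\mathbb{Q}(s)$, a regular $\ast$-subring. Then $C(F)R\cong\{f(s,t)/(p(s)q(t))\mid f\in\mathbb{Q}[s,t],\ p,q\neq 0\}$, the localization of $\mathbb{Q}[s,t]$ inverting the nonzero polynomials in $s$ alone and in $t$ alone. In this domain $s-t$ is a non-unit (from $(s-t)f=p(s)q(t)$, substituting $t=s$ gives $p(s)q(s)=0$, a contradiction), and a commutative regular domain is a field, so $C(F)R$ is not regular. Since every nonzero $\mu_b$ is bijective, $C(F)R\cap J(V_F)=0$, hence $\hat{R}/J(V_F)\cong C(F)R$ is not regular, and neither is $\hat{R}$. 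In this example $t\,\id_V$ is transcendental over $R$ modulo $J(V_F)$, so the algebraicity (let alone separability) you hope to establish cannot hold.

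The deeper problem is that your ``essentially forced'' minimality is exactly what dooms the approach: the lemma asserts existence of \emph{some} regular $\ast$-subring containing $R\cup\hat{J}(V_F)$, and in general it must be strictly larger than the subring they generate --- in the example any admissible $\hat{R}$ must contain a quasi-inverse of $\mu_{s-t}$, hence $\mu_{1/(s-t)}$, pushing one at least up to the field compositum $\mathbb{Q}(s,t)$. So the missing idea is an enlargement process (adjoining quasi-inverses while preserving $\ast$-closure and the ideal property), not a finer analysis of the minimal candidate. For comparison, the paper's own proof is a two-line reduction: it forms $R'=\{\lambda\varphi\mid\varphi\in R,\ \lambda\in C(F)\}$, asserts that $R'$ is a regular $\ast$-subring, and then cites \cite[Proposition 4.5]{act}, which adjoins $\hat{J}(V_F)$ to a regular $\ast$-subring closed under central scalars by essentially your extension-principle argument. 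You have therefore reconstructed the intended skeleton correctly, and your instinct that regularity of the central span is ``the genuine obstacle'' is exactly right: it is the one claim in the paper's proof that is left unargued, and the example above shows it is genuinely problematic (the displayed set of products is not even additively closed, and its additive closure need not be regular). But your proposed resolution runs in the wrong direction; what is needed is a larger $\hat{R}$, not a proof that the smallest one already works.
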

\begin{proof}
$\{ \lambda \varphi\mid \varphi \in R,\,\lambda \in C(F)\}$
is a regular $\ast$-subring $R'$ of 
 $\LEnd(V_F)$
and \cite[Proposition 4.5]{act} applies to $R'$.
\end{proof}

Recall that a [faithful] representation of a $\ast$-ring $R$ within a pre-hermitian space $V_F$ is an embedding  $\varepsilon\colon R\to\LEnd(V_F)$.
It is convenient to consider representations as unitary $R$-$F$-bimodules $_RV_F$ (where the action of $R$ is given as $rv=\varepsilon(r)(v)$)
with sesquilinear form on $V_F$; that is, a $3$-sorted structure
with sorts $V$, $R$, and $F$. Considering a $*$-subring $A$
of $R$ we may add a fourth sort, $A$, and the embedding map.
to obtain $(_RV_F;A)$.
Any elementary extension $(_{\tilde{R}}\tilde{V}_{\tilde{F}};\tilde{A})$
is again such a structure, that is, a representation of 
$\tilde{R}$ and a $*$-ring $\tilde{A}$ which may be considered
as $*$-subring of $\tilde{R}$.
It is a \emph{modestly saturated} extension
if, for each set $\Sigma(\bar x)$ of first order formulas
in finitely many [sorted] variables and with
parameters from  $(_RV_F;A)$, one has
$(_{\tilde{R}}\tilde{V}_{\tilde{F}};\tilde{A})\models \exists \bar x.\Sigma(\bar x)$,
provided that
$\Sigma(\bar x)$ is \emph{finitely realized} in $(_RV_F;A)$, that is
$(_RV_F;A)\models \exists \bar x.\Psi(\bar x)$
 for every finite subset $\Psi(\bar x)$ of $\Sigma(\bar x)$.
Such extension  always exists, cf. \cite[Corollary 4.3.1.4]{CK}.

\section{A sufficient  condition for invertibility}
\begin{proposition}
Let $R$ be a regular $*$-ring represented
within the pre-hermitean space $V_F$. 
Then $r \in R$ is a unit in $R$ provided that the following holds
\begin{itemize}
\item[(**)] For each $U \in \mathbb{O}(V_F)$ there are $W\in \mathbb{O}(V_F)$ 
and  a unit $\alpha $ of ${\sf End}(W_F)$ such that
$\alpha|U=r|U$ and $\alpha^*|U=r^*|U$.
\end{itemize}
\end{proposition}

\begin{proof}
We recall the relevant steps of  the proof of \cite[Theorem 10.1]{act}. 
Given a re\-presentation $_RV_F$
of the regular $*$-ring $R$,
we may assume that $\dim V_F\geqslant\omega$.
In view of Proposition \ref{atex2}
and Fact \ref{unit}, we also may assume
that $R$ is a $*$-subring of $\LEnd(V_F)$ containing $A=\hat{J}(V_F)$
and having ideal  $J(V_F)$. 
Choose $(_{\tilde{R}}\tilde{V}_{\tilde{F}};\tilde{A})$
a modestly saturated elementary extension of $(_RV_F;A)$.

 Let $J_0$ denote the set of projections in $J(V_F)$.
For $a\in\tilde{A}$ and $r\in R$, we put
$a\sim r$ if
$ae=re$ and $a^\ast e=r^\ast e$ for all $e\in J_0$.
According to Claims 1--4 in the proof of 
\cite[Theorem 10.1]{act},
$S=\{a \in \hat{A}\mid a\sim r\ \text{for some}\ r\in R\}$ is a
regular  $\ast$-subring 
of $\hat{A}$ and 
there is a surjective homomorphism $g:S\to R$
such that  $g(a)=r$ if and only if $a\sim r$.
Being an elementary extension of $A$, $\tilde{A}$ 
is  directly finite and so is its subring $S$.

Now, 
  consider a finite set $E\subseteq J_0$.
According to $(*)$,
 there  is $e\in J_0$ such that $ef=f$ 
and $er^\ast f=r^\ast f$
 for all $f\in E$.
Take $a=re$ and observe that $af=ref=rf$ and $a^\ast f =er^\ast f=r^\ast f$ for all $f\in E$ (actually, this follows by the proof of Linells's 
Theorem \cite{FU}).

Thus, by $(**)$ the set
\[
\Sigma(x)=\bigl\{[xe=re]\ \& .\ [x^\ast e=r^\ast e] \& [\exists y.\,yx=1=xy]\mid e\in J_0\bigr\}
\]
 of formulas
with   variables $x,y$ of type $A$
 is finitely realized in $(_{\tilde{R}} \tilde{V}_{\tilde{F}} ;\tilde{A})$.
By saturation,
 there are   $a,b \in \tilde{A}$ 
$ba=1=ba$ and 
 $a\sim r$, whence  $a \in S$ and $g(a)=r$.
By Fact~\ref{unit}, 
$a$ is a unit in $S$.
Moreover, $a$ is left cancellable in $\tilde{R}$ whence in
the  subring $S$.  Hence, $r=g(a)$ 
is a unit of $R$. 
\end{proof}

\begin{problem}
Is $(**)$ also necessary for $r$ being a unit in the represented
$*$-regular ring $R$?

\end{problem}

An example of a simple regular $*$-ring which is not
finite is obtained as follows:
Let $V_F$ a vector space of countably infinite dimension,
and $R=\End(V_F)/J(V_F)$. Of course, $R$ is not directly finite.
 Define the involution
on the direct product $R\times R^{op}$ 
by exchange: $(r,s)^*=(s,r)$ to obtain the $*$-ring $S$.
Now, if $rs=1$ but $sr\neq 1$
 then $xx^*=1$ but $x^*x\neq 1$ in $S$ for $x=(r,s)$.


\begin{thebibliography}{99}

\bibitem{ara}
{Ara, P.}, {Menal, P.}:
On regular rings with involution.
Arch. Math. \textbf{42}, 126--130 (1984)




\bibitem{CK}
{Chang, C.C.}, {H.\,J. Keisler, H.J.}:
Model Theory.
Third ed., Amsterdam (1990)




\bibitem{ehr} 
{Ehrlich, G.}:
Units and one-sided units in regular rings,
Trans. Amer. Math. Soc.
\textbf{216}
(1976), 81--90.





\bibitem{FU}
{C. Faith} and {Y. Utumi}, On a new proof of Litoff's theorem,
 Acta Math.Acad. Sci Hungar. \textbf{14} (1963), 369--371.










\bibitem{good}
{Goodearl, K.R.}:
Von Neumann Regular Rings.
Krieger, Malabar (1991)




\bibitem{nik2} 
Herrmannn, C., Niemann, N.:
On linear representations of $\ast$-regular rings having representable
ortholattice of projections. 
 arXiv:1811.01392 [math.RA]. \url{https://arxiv.org.abs/1811.01392}





\bibitem{hr2}
{Herrmann, C.}, {Roddy, M.S.}:
On varieties of modular ortholattices that are generated by their finite dimensional members.
Algebra Universalis \textbf{72}, 349--357 (2014)



\bibitem{PartI}
{Herrmann, C.}, {Semenova, M.}:
Rings of quotients of finite $AW^\ast$-algebras: Representation and algebraic approximation.
Algebra and Logic \textbf{53}, 298--322 (2014)


\bibitem{act} 
{Herrmann, C.}, {Semenova, M.}:
Linear representations of regular rings and complemented modular
lattices with involution. Acta Sci. Math.
 (Szeged) \textbf{82}, 395--442 (2016)



\bibitem{arch} Herrmann, C.: Direct finiteness of representable regular $·\ast$-rings. Algebra Universalis \textbf{80:1} Paper No. 3, 5 pp. (2019), arXive-math 1901.03555.



\bibitem{arch2}
Herrmann, C.:
Varieties of $\ast$-regular rings.
Math. Slovaca \textbf{70}  no. 4, 815--820 (2020), arXive-math 1904.04505.




\bibitem{he3}
Herrmann, C.:
Unit-regularity and representability for semiartinian $\ast$-regular rings. 
Arch. Math. (Brno) \textbf{56}  no. 1, 43--47 (2020),
arXive-math 1907.13367.


\bibitem{count} Herrmann. C.: Direct finiteness of regular rings with
involution: A counterexample (withdrawn), arXive-math 2408.16437. 







\bibitem{Flo}
{Micol, F.}:
On Representability of $\ast$-Regular Rings and Modular Ortholattices.
PhD thesis, Technische Universit\"{a}t Darmstadt (2003).
\url{http://elib.tu-darmstadt.de/diss/000303/diss.pdf}


\bibitem{Nik}
{Niemann, N.}:
On representability of $\ast$-regular rings in endomorphism rings of vector spaces. PhD thesis
Technische Universit\"{a}t Darmstadt (2007)


\bibitem{tyu}
{Tyukavkin, D.V.}:
Regular rings with involution.
Vestnik Moskovskogo Universiteta. Mate\-matika \textbf{39}, 29--32 (1984)
 (Russian)



\bibitem{fred} 
Wehrung, F.:
Congruences of lattices and ideals of rings. In: Gr\"atzer, G. Wehrung, F. (eds)
 Lattice theory: Special topics and applications. Vol. 1, 297--335, Birkh{\"a}user/Springer, Cham (2014)

\end{thebibliography}
\end{document}